\newtheorem{thm}{Theorem}
\newtheorem{theorem}[thm]{Theorem}
\newtheorem{definition}[thm]{Definition}
\newtheorem{remark}{Remark}
\newtheorem{proof}{Proof:}
\begin{document}
\title {Generalized Estimating Equation for the\\ Student-t Distributions}
\author{
\IEEEauthorblockN{Atin Gayen}
\IEEEauthorblockA{Discipline of Mathematics\\
Indian Institute of Technology Indore \\
Indore, Madhya Pradesh 453552, India\\
Email: atinfordst@gmail.com}
\and
\IEEEauthorblockN{M. Ashok Kumar}
\IEEEauthorblockA{Discipline of Mathematics\\
Indian Institute of Technology Indore \\
Indore, Madhya Pradesh 453552, India\\
Email: ashokm@iiti.ac.in}
}

\maketitle

\begin{abstract}
In \cite{KumarS15J2}, it was shown that a generalized maximum likelihood estimation problem on a (canonical) $\alpha$-power-law model ($\mathbb{M}^{(\alpha)}$-family) can be solved by solving a system of linear equations. This was due to an orthogonality relationship between the $\mathbb{M}^{(\alpha)}$-family and a linear family with respect to the relative $\alpha$-entropy (or the $\mathscr{I}_\alpha$-divergence). Relative $\alpha$-entropy is a generalization of the usual relative entropy (or the Kullback-Leibler divergence). $\mathbb{M}^{(\alpha)}$-family is a generalization of the usual exponential family. In this paper, we first generalize the $\mathbb{M}^{(\alpha)}$-family including the multivariate, continuous case and show that the Student-t distributions fall in this family. We then extend the above stated result of \cite{KumarS15J2} to the general $\mathbb{M}^{(\alpha)}$-family. Finally we apply this result to the Student-t distribution and find generalized estimators for its parameters.
\end{abstract}

\section{Introduction and preliminaries}
\label{sec:introduction}
 The {\em exponential families} of probability distributions are important in statistics as many important probability distributions like Binomial, Poission, Gaussian and so on fall
in this class. Let $\bf{X}$ $=(X_1,\dots, X_d)$ be a $d$-dimensional random vector that follows a probability distribution
$p_\theta,\theta\in \Theta$, where $\Theta$ is an open subset of $\mathbb{R}^k$. Suppose also that $X_1,\dots,X_d$ are jointly continuous (or jointly discrete).
The family of probability distributions $\mathcal{E} = \lbrace p_\theta : \theta\in\Theta \rbrace$
is said to belong to a {\em $k$-parameter exponential family} if it can be represented in the following form \cite[Eq. (7.7.5)]{HoggCM13B}.
\begin{eqnarray}
\label{eqn:expoential_family}
	\hspace*{-0.03cm}{p_\theta(\textbf{\textit{x}})} = \left\{
	\begin{array}{ll}
	
	 {\exp [q(\textbf{\textit{x}}) +Z(\theta)+ w(\theta)^T f(\textbf{\textit{x}})]} &\hbox{if~} 
	\textbf{\textit{x}}\in \mathbb{S}\\
	  {0} &\hbox{otherwise,}
	\end{array}
	\right.
	\end{eqnarray}
where $\mathbb{S}$
denotes the support of $p_\theta$ (that is, $p_\theta(\textbf{\textit{x}}) > 0$ for $\textbf{\textit{x}}\in \mathbb{S}$). Here $w := (w_1,\dots,w_s)^T$ and 
$f := (f_1,\dots,f_s)^T$ such that for all $i=1,\dots,s$,
$w_i: \Theta\to \mathbb{R}$ and
$f_i: \mathbb{R}^d\to \mathbb{R}$  are some functions, and $q:\mathbb{R}^d\to \mathbb{R}$ is a 
non-negative function. Further, $Z:\Theta\to\mathbb{R}$ is a
function which makes $p_\theta$ a probability distribution and all $w_i(\theta)$ and $Z(\theta)$ are assumed to be differentiable on $\Theta$.

The exponential family can be thought of as {\em projections} of the
 Kullback-Leibler (KL) divergence on a set of probability distributions determined by some linear constraints, called {\em linear family} \cite{Csiszar75J}, \cite{CsiszarS04B}. $\mathscr{I}_\alpha$-divergence is a generalization of the KL-divergence and is defined as follows. For probability distributions $p$ and $q$ on $\mathbb{R}^d$,
 	\begin{eqnarray*}
	\label{1defn:I_alpha_divergence}
	\mathscr{I}_\alpha(p, q)
	 := \tfrac{\alpha}{1 - \alpha}\log\int 
	p (\textbf{\textit{x}})q(\textbf{\textit{x}})^{\alpha -1}d\textbf{\textit{x}}
	 - \tfrac{1}{1 - \alpha}\log\int p(\textbf{\textit{x}})^{\alpha}d\textbf{\textit{x}}\\ 
	 + \log\int  q(\textbf{\textit{x}})^{\alpha}d\textbf{\textit{x}},
	\end{eqnarray*}
 \cite{Sundaresan02ISIT}, \cite{Sundaresan07J}, \cite{LutwakYZ05J}, \cite{FujisawaE08J} (also known as {\em relative $\alpha$-entropy} \cite{KumarS15J1}, \cite{KumarS15J2}, {\em logarithmic density power divergence} \cite{MajiGB16J},
	{\em projective power divergence \cite{EguchiK10J}}, {\em $\gamma$-divergence} \cite{FujisawaE08J},
	\cite{CichockiA10J}). Here $\alpha>0,\alpha\neq 1$, called the order of the divergence. Notice that, $\mathscr{I}_\alpha$ coincides with the KL-divergence
	as $\alpha\to 1$ \cite{KumarS15J2}, \cite{CichockiA10J}. In this sense $\mathscr{I}_\alpha$-divergence can be regarded as a generalization of KL-divergence.  $\mathscr{I}_\alpha$-divergence also arises in information theory as redundancy in the mismatched case of guessing (for $\alpha < 1$) \cite{Sundaresan02ISIT}, source coding \cite{KumarS15J2}, and in encoding of tasks \cite{BunteL14J}.
	
	Analogous to the fact that the projections of KL-divergence on linear families yield an exponential family, the projections of $\mathscr{I}_\alpha$-divergence on linear families yield an $\alpha$-power-law family, $\mathbb{M}^{(\alpha)}$. A general $\mathbb{M}^{(\alpha)}$-family can be defined, including the continuous and
	multivariate case, as follows.
	\begin{definition}
\label{1defn:general_M_alpha_family}
The family of probability distributions $\lbrace p_\theta : \theta\in\Theta \rbrace$ is said to belong to a {\em $k$-parameter $\mathbb{M}^{(\alpha)}$ family} if it can be written as
\begin{displaymath}
	{p_\theta(\textbf{\textit{x}})} = \left\{
	\begin{array}{ll}
	 {Z(\theta)^{-1}\big[ q(\textbf{\textit{x}})^{\alpha - 1} +
w(\theta)^T f(\textbf{\textit{x}}) \big]^{\frac{1}{\alpha - 1}}} {\text{~if}~} 
	\textbf{\textit{x}}\in \mathbb{S}\\
	  {0} \hspace*{5cm}\text{{otherwise}}
	,
	\end{array}
	\right.
	\end{displaymath}
that is,
\begin{equation}
\label{1eqn:form_of_general_M_alpha_family}
p_\theta(\textbf{\textit{x}}) = Z(\theta)^{-1}\big[ q(\textbf{\textit{x}})^{\alpha - 1} +
w(\theta)^T f(\textbf{\textit{x}}) \big]_+^{\frac{1}{\alpha - 1}},
\end{equation}
with $[r]_+:=\max\lbrace r,0\rbrace$, for any $r\in\mathbb{R}$ and the functions
$w,f,q$ and $Z$ are as defined in (\ref{eqn:expoential_family}). 
If the number of $w_i$'s is
equal to that of $\theta_i$'s and each $w_i(\theta) = \theta_i$, such a
family is called {\em canonical\footnote{Analogous to the canonical exponential
 family.} 
$\mathbb{M}^{(\alpha)}$-family} \cite[Def. 8]{KumarS15J2}. This 
canonical form of the family arises as a projection of the 
$\mathscr{I}_\alpha$-divergence on a linear family of
probability distributions \cite{KumarS15J1}.
 Many well-known distributions such as {\em Wigner semi-circle distribution},
{\em Wigner parabolic distribution} and more interestingly, the {\em Student-t
distributions} fall in the class $\mathbb{M}^{(\alpha)}$.
\end{definition}
As KL-divergence is closely related to the maximum likelihood estimation (MLE),
the $\mathscr{I}_\alpha$-divergence is closely related to a robustified version of
MLE. Indeed, if $\textbf{X}_1,\dots, \textbf{X}_n$ is an independent and identically distributed (i.i.d.) sample drawn according to some $p_\theta$ of a parametric model, $\Pi = \{p_\theta : \theta\in\Theta\}$
and $\mathbb{S}$ is the common support of $\Pi$ (that is, support of members of $\Pi$ does not depend on $\theta$),
to find the MLE of $\theta$, one needs to solve the so-called {\em score equation} or {\em estimating equation} for $\theta$, given by
\begin{eqnarray}
\label{1eqn:score_equation_mle_in_terms_of_sample}
\frac{1}{n}\sum\limits_{j=1}^n s(\textbf{X}_j; \theta) = 0.
\end{eqnarray}
Here $s(\cdot ;\theta) := \nabla\log p_\theta(\cdot)$, called the {\em score function} and $\nabla$ stands for gradient with respect to $\theta$. 
If there is contamination in the observed sample, one modifies the score equation
by replacing the usual average of the score functions $s(\textbf{X}_j; \theta)$
in (\ref{1eqn:score_equation_mle_in_terms_of_sample}) by some weighted average
that down-weights the effect of the outliers. Motivated by the works of Field and Smith \cite{FieldS94J} and Windham \cite{Windham95J}, the following
estimating equation was proposed by Jones {\em et. al.} \cite{JonesHHB01J}:
\begin{eqnarray}
\label{1eqn:score_equation_I_alpha}
\dfrac{\frac{1}{n}\sum\limits_{j=1}^n  p_\theta(\textbf{X}_j)^{\alpha-1}
s(\textbf{X}_j;\theta)}
{\frac{1}{n}\sum\limits_{j=1}^n  p_\theta(\textbf{X}_j)^{\alpha-1}}
= \dfrac{\int p_\theta(\textbf{\textit{x}})^\alpha s(\textbf{\textit{x}};\theta)d\textbf{\textit{x}}}
{\int p_\theta(\textbf{\textit{x}})^\alpha d\textbf{\textit{x}}},
\end{eqnarray} 
where $\alpha>1$. The above equation was proposed based on the following intuition. If a sample point 
$\textbf{\textit{x}}$ is not compatible to the true distribution $p_\theta$,
then $p_\theta(\textbf{\textit{x}})^{\alpha -1}$ would be smaller and thus
down-weights the effect of $\textbf{\textit{x}}$ in the average of
the score functions. The equation is obtained by equating the normalized empirical
weighted average to its hypothetical one (c.f. \cite{BasuSP11B}).
Observe that, (\ref{1eqn:score_equation_I_alpha}) does not make sense in terms
of robustness for $\alpha < 1$. However, it is a valid estimation problem
even for $\alpha <1$ as minimization of $\mathscr{I}_\alpha$-divergence for
$\alpha <1$ corresponds to certain estimation problems in information theory, such as guessing \cite{Sundaresan07J},
source coding (see \cite[Sec. II C]{KumarS15J2}) and encoding of tasks \cite{BunteL14J}.

Csisz\'ar and Shields showed that if $\Pi$ is a canonical exponential family with
finite support $\mathbb{S}$, the MLE (if exists and unique) is a solution to a system of linear equations
\cite[Th. 3.3]{CsiszarS04B}. This was due to an orthogonality relationship between the exponential family and a linear family with respect to the relative entropy. By exploiting the geometry between $\mathscr{I}_\alpha$-divergence, $\mathbb{M}^{(\alpha)}$-family and 
linear family, analogously, Kumar and Sundaresan showed that if $\Pi$ is a canonical $\mathbb{M}^{(\alpha)}$-family with finite support $\mathbb{S}$, then the solution of
(\ref{1eqn:score_equation_I_alpha}) (if exists and unique) is same as solution of
a system of linear equations \cite[Th. 18 and Th. 21]{KumarS15J2}. In this paper, we solve this problem for the general $\mathbb{M}^{(\alpha)}$-family by directly solving the estimating equation. We show that, under some regularity assumptions,
the result continues to hold for general $\mathbb{M}^{(\alpha)}$-family as well. We then apply this result to find estimators for the student-t distributions.
We assume the following regularity conditions unless stated otherwise.
\begin{itemize}
\item[(a)]All the integrals are well-defined over $\mathbb{S}$ with respect to the Lebesgue measure
on $\mathbb{R}^{d}$ in the continuous case and with respect to the counting measure in
discrete case.
\item[(b)]For probability distributions $p_\theta,p_\eta\in \Pi$, if $\theta\neq \eta$, then $p_\theta \neq p_\eta$ on a set of positive measure.
 \item[(c)] The support $\mathbb{S}$ does not depend on $\theta$. Integration with respect to $\textbf{\textit{x}}$ and
differentiation with respect to $\theta$ can be interchanged.
\end{itemize} 
 \section{Estimation on $\mathbb{M}^{(\alpha)}$ family}
 \label{sec:Estimation on M alpha family}
 In this section, we solve the estimation problems (MLE or the generalized MLE) associated with
 the KL-divergence and $\mathscr{I}_\alpha$-divergence by solving the respective estimating equations.
 \begin{theorem}
The following are true.
\vspace*{-0.2cm}
\label{1thm:estimating_equations_for_general_families}
\begin{itemize}
\item[(i)] The MLE on an exponential family as defined in (\ref{eqn:expoential_family}) must satisfy 
 \begin{equation}
\label{1eqn:estimating_equation_for_general_exponential_family}
\partial _r [w (\theta)]^T \mathbb{E}_\theta [f(\textbf{X})] = 
\partial _r [w (\theta)]^T \bar{f},~r=1,\dots,k.
\end{equation}
\item[(ii)] The Jones et. al. estimator on an $\mathbb{M}^{(\alpha)}$ family as in
Definition \ref{1defn:general_M_alpha_family} must satisfy
\begin{align}
\label{1eqn:estimating_equation_for_general_M_alpha_family}
\frac{ \partial_r [w(\theta)]^T \mathbb{E}_\theta [f(\textbf{X})]}
{\mathbb{E}_\theta [q(\textbf{X})^{\alpha - 1} + 
w(\theta)^T f(\textbf{X}) ]}=
\frac{ \partial_r [w(\theta)]^T \bar{f}}
{\overline{q^{\alpha -1}} +  w(\theta)^T \bar{f}},\\r=1,\dots,k\nonumber.
\end{align}
\end{itemize}
Here $\partial_r$ denotes the partial derivative with respect to $\theta_r$,
$\bar{f} := (\bar{f_1},\dots,\bar{f_s})^T$, $\bar{f_i}=\tfrac{1}{n}\sum_{j=1}^n
f_i(\textbf{X}_j)$ for $i=1,\dots,s$, and $\overline{q^{\alpha -1}}:= \tfrac{1}{n}\sum_{j=1}^n q(\textbf{X}_j)^{\alpha -1}$.
\end{theorem}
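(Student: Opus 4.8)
The plan is to compute the relevant score functions explicitly and then substitute them into the respective estimating equations, using the structural form of each density to force the unknown normalizer out of the picture. For part (i), I would begin by observing that for the exponential family the log-density is $\log p_\theta(\mathbf{x}) = q(\mathbf{x}) + Z(\theta) + w(\theta)^T f(\mathbf{x})$, so the $r$-th component of the score is $s_r(\mathbf{x};\theta) = \partial_r Z(\theta) + \partial_r[w(\theta)]^T f(\mathbf{x})$, affine in $f(\mathbf{x})$ with $\theta$-dependent coefficients. Inserting this into the MLE score equation (\ref{1eqn:score_equation_mle_in_terms_of_sample}) gives $\partial_r Z(\theta) + \partial_r[w(\theta)]^T \bar{f} = 0$. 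Separately, differentiating the normalization identity $\int p_\theta\,d\mathbf{x} = 1$ under the integral sign (justified by regularity condition (c)) yields $\mathbb{E}_\theta[s_r(\mathbf{X};\theta)] = 0$, that is, $\partial_r Z(\theta) + \partial_r[w(\theta)]^T \mathbb{E}_\theta[f(\mathbf{X})] = 0$. Subtracting the two relations eliminates $\partial_r Z(\theta)$ and delivers (\ref{1eqn:estimating_equation_for_general_exponential_family}) directly.

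For part (ii) I would follow the same philosophy but exploit the power-law form more aggressively. On the support, $\log p_\theta(\mathbf{x}) = -\log Z(\theta) + \frac{1}{\alpha-1}\log[q(\mathbf{x})^{\alpha-1} + w(\theta)^T f(\mathbf{x})]$, so the $r$-th score is $s_r(\mathbf{x};\theta) = -\partial_r \log Z(\theta) + \frac{1}{\alpha-1}\cdot\frac{\partial_r[w(\theta)]^T f(\mathbf{x})}{q(\mathbf{x})^{\alpha-1} + w(\theta)^T f(\mathbf{x})}$. The decisive observation is that $p_\theta(\mathbf{x})^{\alpha-1} = Z(\theta)^{-(\alpha-1)}[q(\mathbf{x})^{\alpha-1} + w(\theta)^T f(\mathbf{x})]$, so the bracket in the denominator of the score cancels exactly against the weighting factor $p_\theta^{\alpha-1}$ appearing throughout the Jones et al. equation (\ref{1eqn:score_equation_I_alpha}). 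Consequently the product $p_\theta(\mathbf{x})^{\alpha-1} s_r(\mathbf{x};\theta)$ collapses to an affine expression in $q(\mathbf{x})^{\alpha-1}$ and $f(\mathbf{x})$, with no residual $\mathbf{x}$ left in any denominator.

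The remaining work is bookkeeping. I would evaluate all four quantities in (\ref{1eqn:score_equation_I_alpha}): the weighted empirical numerator and denominator on the left produce ratios built from $\overline{q^{\alpha-1}}$ and $\bar{f}$, while the two integrals on the right produce the corresponding ratios with $\mathbb{E}_\theta[q(\mathbf{X})^{\alpha-1}]$ and $\mathbb{E}_\theta[f(\mathbf{X})]$. In both ratios the common prefactor $Z(\theta)^{-(\alpha-1)}$ drops out, and, crucially, the additive term $-\partial_r\log Z(\theta)$ appears identically on each side. Equating the two and cancelling this term together with the harmless factor $\tfrac{1}{\alpha-1}$ leaves precisely (\ref{1eqn:estimating_equation_for_general_M_alpha_family}).

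The main obstacle I anticipate is purely the algebraic simplification of $p_\theta^{\alpha-1}s_r$ in part (ii): one must recognize that the score's denominator is exactly the bracket carried by $p_\theta^{\alpha-1}$, for otherwise the sums and integrals do not close in elementary form. Once this cancellation is spotted the argument reduces to matching numerators and denominators, and the generally intractable normalizer $Z(\theta)$ of the $\mathbb{M}^{(\alpha)}$-family never needs to be computed, since $\partial_r\log Z(\theta)$ cancels across the estimating equation. I would also remark that the truncation $[\cdot]_+$ is inactive on $\mathbb{S}$, where the bracket is strictly positive, so the differentiation above is legitimate there.
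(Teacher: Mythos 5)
Your proposal is correct and follows essentially the same route as the paper: part (i) eliminates $\partial_r Z(\theta)$ by combining the empirical score equation with the identity $\mathbb{E}_\theta[s_r(\textbf{X};\theta)]=0$ obtained from regularity condition (c), and part (ii) rests on the same key cancellation — that $p_\theta^{\alpha-1}s_r = p_\theta^{\alpha-2}\partial_r p_\theta$ is affine in $q^{\alpha-1}$ and $f$ because the bracket in the score's denominator is exactly $Z(\theta)^{\alpha-1}p_\theta^{\alpha-1}$ — after which the $\partial_r \log Z(\theta)$ terms cancel across the two sides of the Jones et al.\ equation. (Incidentally, your signs are the correct ones; the paper's intermediate display for $p_\theta^{\alpha-2}\partial_r p_\theta$ carries an inconsequential overall sign flip.)
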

\begin{proof} 
Consider an i.i.d. sample $\textbf{X}_1,\dots,
\textbf{X}_n$ drawn according to some $p_\theta$, where $\textbf{X}_i = (X_{1i},\dots, X_{di})^T, ~\forall i= 1,\dots, n$. 
\begin{itemize}
	\item [(i)] If $p_\theta$ is in an exponential family as in
	(\ref{eqn:expoential_family}), we have for $r=1,\dots,k$,
	\begin{align*}
	\partial_r[\log p_\theta (\textbf{\textit{x}})] = \partial_r [Z(\theta)] + \partial_r [w(\theta)]^T
	f(\textbf{\textit{x}}). 
	\end{align*}
	$\mathbb{E}_\theta [\partial_r \log p_\theta (\textbf{X})]=0$, by the
	regularity condition (c), thus
	we have
	\begin{equation}
	\label{1eqn:expectation_of_exponential_family}
	\partial_r [Z(\theta)] + \partial_r [w(\theta)]^T
	\mathbb{E}_\theta [f(\textbf{X})]=0.
	\end{equation}
	Using (\ref{1eqn:expectation_of_exponential_family})
	in the estimating equation (\ref{1eqn:score_equation_mle_in_terms_of_sample}), we have (\ref{1eqn:estimating_equation_for_general_exponential_family}).
	
\item[(ii)]The estimating equation (\ref{1eqn:score_equation_I_alpha}) can be re-written as
		\begin{eqnarray}
		\label{1eqn:score_equation_I_alpha_calculated_2}
		\tfrac{\frac{1}{n}\sum\limits_{i=1}^n p_\theta(X_i)^{\alpha-2}\nabla p_\theta(X_i)}{\frac{1}{n}\sum\limits_{i=1}^n p_\theta(X_i)^{\alpha-1}} =
		\tfrac{\int p_\theta(x)^{\alpha-1}\nabla p_\theta(x)d\textbf{\textit{x}}}{\int p_\theta(x)^{\alpha}d\textbf{\textit{x}}}.
		\end{eqnarray}

 Since $p_\theta\in \mathbb{M}^{(\alpha)}$, from
(\ref{1eqn:form_of_general_M_alpha_family}) we have
\begin{eqnarray}
\lefteqn{p_\theta(\textbf{\textit{x}})^{\alpha -2} 	
	\partial_r [p_\theta (\textbf{\textit{x}})]}\nonumber\\
&& = Z(\theta)^{1-\alpha} \big\lbrace Z(\theta)^{-1} \partial _r[Z(\theta)]
[q(\textbf{\textit{x}})^{\alpha -1} +\nonumber\\
&&\hspace*{1.2cm} w(\theta)^T f(\textbf{\textit{x}})] -\tfrac{1}{\alpha -1} \partial_r[w(\theta)]^T f(\textbf{\textit{x}})
\big\rbrace,\nonumber
\end{eqnarray}
for $r=1,\dots,k$.
Using this in (\ref{1eqn:score_equation_I_alpha_calculated_2}) we get,
\begin{eqnarray*}
Z(\theta)^{-1}\partial_r[Z(\theta)] -  \tfrac{\tfrac{1}{\alpha -1} \partial_r[w(\theta)]^T
\big[\tfrac{1}{n}\sum\limits_{j=1}^n 
f(\textbf{X}_j)\big]}{\tfrac{1}{n}\sum\limits_{j=1}^n [q(\textbf{X}_j)^{\alpha -1} +
w(\theta)^T f(\textbf{X}_j)]}\hspace*{1cm}\\
=
Z(\theta)^{-1}\partial_r[Z(\theta)] - \tfrac{\tfrac{1}{\alpha -1} \partial_r[w(\theta)]^T
\int p_\theta (\textbf{\textit{x}})
f(\textbf{\textit{x}})d\textbf{\textit{x}}}
{\int p_\theta (\textbf{\textit{x}})^\alpha d\textbf{\textit{x}}},
\end{eqnarray*}
which implies
(\ref{1eqn:estimating_equation_for_general_M_alpha_family}). 
\end{itemize}
\end{proof}

In the following theorem we show that for a {\em regular} $\mathcal{E}$ or $\mathbb{M}^{(\alpha)}$-
family, Theorem \ref{1thm:estimating_equations_for_general_families} can be improved further.
\begin{definition}
\label{1defn:regular_family}
If, for an exponential family $\mathcal{E}$ (respectively, $\mathbb{M}^{(\alpha)}$-family), the following conditions are true, then such a family is said to be a 
{\em $k$-parameter regular exponential family}
(respectively, {\em $k$-parameter regular $\mathbb{M}^{(\alpha)}$ family}).
\begin{itemize}
\item[(i)] Number of $w_i$'s and number of $\theta_i$'s are equal ($s=k$),
\item[(ii)] 1, $w_1(\cdot), \dots, w_k(\cdot)$ are functionally independent
on $\Theta$,
\item[(iii)] $f_1(\cdot),\dots,f_k(\cdot)$ are functionally independent on $\mathbb{S}$.
\end{itemize}
\end{definition}
\begin{theorem} 
 \label{1thm:estimating_equation_on_regular_family}
If the families in Theorem \ref{1thm:estimating_equations_for_general_families} are further regular,
then
\begin{itemize}
\item[(i)] the MLE on exponential family must satisfy
\begin{equation}
\label{1eqn:estimating_equation_for_regular_exponential}
\mathbb{E}_\theta [f(\textbf{X})] = \bar{f},
\end{equation}
\item[(ii)] the Jones {\em et. al.} estimator on an $\mathbb{M}^{(\alpha)}$-family must satisfy
\begin{equation}
\label{1eqn:estimating_equation_for_regular_M_alpha}
\frac{\mathbb{E}_\theta [f(\textbf{X})]}
{\mathbb{E}_\theta [q(\textbf{X})^{\alpha - 1} ]} = 
\frac{ \bar{f}}
{\overline{q^{\alpha -1}}}.
\end{equation}
\end{itemize}
\end{theorem}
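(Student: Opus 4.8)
The plan is to obtain both assertions directly from Theorem~\ref{1thm:estimating_equations_for_general_families}, using the regularity conditions of Definition~\ref{1defn:regular_family} to strip off the factor $\partial_r[w(\theta)]$ that sits in front of the estimating equations. The algebraic object carrying these hypotheses is the $k\times k$ Jacobian matrix $J(\theta)$ with entries $J_{ri}=\partial_r w_i(\theta)$, which is well defined because condition~(i) forces $s=k$. The first step is to observe that functional independence of $1,w_1,\dots,w_k$ on $\Theta$ (condition~(ii)) is exactly the statement that $J(\theta)$ is nonsingular, so it can be cancelled from any linear relation it premultiplies.

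For part~(i) I would read the $k$ scalar identities in \eqref{1eqn:estimating_equation_for_general_exponential_family} as the single matrix equation $J(\theta)\bigl(\mathbb{E}_\theta[f(\textbf{X})]-\bar f\,\bigr)=0$, whose $r$-th row is the dot product $\partial_r[w(\theta)]^T\bigl(\mathbb{E}_\theta[f(\textbf{X})]-\bar f\,\bigr)$. Multiplying on the left by $J(\theta)^{-1}$ then yields $\mathbb{E}_\theta[f(\textbf{X})]=\bar f$, which is \eqref{1eqn:estimating_equation_for_regular_exponential}.

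For part~(ii) I would abbreviate $A:=\mathbb{E}_\theta[f(\textbf{X})]$ and $B:=\mathbb{E}_\theta[q(\textbf{X})^{\alpha-1}]$, so that the two denominators in \eqref{1eqn:estimating_equation_for_general_M_alpha_family} become $D_\theta=B+w(\theta)^T A$ and $\bar D=\overline{q^{\alpha-1}}+w(\theta)^T\bar f$. Clearing denominators turns each of the $k$ equations into $\partial_r[w(\theta)]^T\bigl(\bar D\,A-D_\theta\,\bar f\,\bigr)=0$, i.e. $J(\theta)\bigl(\bar D\,A-D_\theta\,\bar f\,\bigr)=0$. Cancelling the nonsingular $J(\theta)$ gives the vector identity $\bar D\,A=D_\theta\,\bar f$, so $A=\lambda\bar f$ with $\lambda=D_\theta/\bar D$. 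It then remains only to pin down $\lambda$: substituting $A=\lambda\bar f$ into $D_\theta=B+\lambda\,w(\theta)^T\bar f$ and using $\bar D=\overline{q^{\alpha-1}}+w(\theta)^T\bar f$ in $\lambda=D_\theta/\bar D$ makes the common term $w(\theta)^T\bar f$ cancel, leaving $\lambda\,\overline{q^{\alpha-1}}=B$. Hence $\lambda=B/\overline{q^{\alpha-1}}$ and $A=\bigl(\mathbb{E}_\theta[q(\textbf{X})^{\alpha-1}]/\overline{q^{\alpha-1}}\bigr)\bar f$, which rearranges to \eqref{1eqn:estimating_equation_for_regular_M_alpha}.

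I expect the only genuine obstacle to be the first step, namely justifying that condition~(ii) translates into invertibility of $J(\theta)$ at the estimating point, since this depends on the precise reading of \emph{functional independence} and may a priori hold only on an open dense subset of $\Theta$ rather than everywhere; condition~(iii) on the $f_i$ is what should guarantee that the components being equated are not themselves linearly degenerate, so that the reduced equation retains the full content of \eqref{1eqn:estimating_equation_for_general_M_alpha_family}. The remaining manipulations (clearing denominators, the cancellation of $w(\theta)^T\bar f$, and solving for $\lambda$) are routine once invertibility is in hand, provided the denominators $D_\theta,\bar D,\overline{q^{\alpha-1}}$ are nonzero.
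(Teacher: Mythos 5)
Your proposal is correct and takes essentially the same route as the paper: reduce each system of $k$ equations to a vector identity by invoking nonsingularity of the Jacobian $\big(\partial_r[w_i(\theta)]\big)$ (the paper supplies the short argument you flag as the obstacle — if $\sum_i c_i\nabla w_i(\theta)\equiv 0$ then $c_1w_1(\theta)+\cdots+c_kw_k(\theta)$ is constant, and functional independence of $1,w_1,\dots,w_k$ forces all $c_i=0$), and then cancel the common term $w(\theta)^T\bar f$ to remove the composite denominators. Your introduction of $\lambda=D_\theta/\bar D$ is just a repackaging of the paper's direct manipulation of the fraction identity, so the two arguments are the same in substance.
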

\begin{proof}
Since for a $k$-parameter regular family, $1$, $w_1(\cdot),\dots,w_k(\cdot)$ are functionally independent
on $\Theta$. This implies the vectors $\big(\partial_1[w_i(\theta)],\dots,\partial_k[w_i(\theta)]\big)$,
 $i=1,\dots,k$, are linearly independent for every $\theta\in\Theta$, because if
\begin{equation*}
\sum\limits_{i=1}^k c_i\big(\partial_1[w_i(\theta)],\dots,\partial_k[w_i(\theta)]\big)  =\bf{0},
\end{equation*}
for some scalars $c_1,\dots,c_k$, then
\begin{equation*}
c_1w_1(\theta) +\cdots +c_k w_k(\theta) = m,
\end{equation*}
where $m$ is a constant. Then functional independence of
$1$, $w_1(\cdot),\dots,w_k(\cdot)$ implies that $m=c_1=\cdots =c_k
=0$. 
\begin{itemize}
\item[(i)] (\ref{1eqn:estimating_equation_for_general_exponential_family}) can be re-written as
\begin{equation}
\label{1eqn:estimatinag_equation_on_E_and_B_alpha_as_a_system}
\partial _r [w (\theta)]^T \big(\mathbb{E}_\theta [f(\textbf{X})] - \bar{f} \big) = 0.
\end{equation}
For any $\theta\in \Theta$, this is a system of $k$-homogeneous equations in $k$ unknowns
where the coefficient matrix is given by
\begin{equation*}
D:= \big(\partial_i[w_j(\theta)]\big)_{k\times k}.
\end{equation*}
As all the columns of $D$ are linearly independent, the determinant of $D$ is non-zero. Hence
(\ref{1eqn:estimatinag_equation_on_E_and_B_alpha_as_a_system}) implies
$\mathbb{E}_\theta [f_i(\textbf{X})] - \bar{f_i} = 0$, for $i=1,\dots,k$. By the regularity assumption
(b), the estimator must satisfy (\ref{1eqn:estimating_equation_for_regular_M_alpha}).

\item [(ii)] (\ref{1eqn:estimating_equation_for_general_M_alpha_family})
can be re-written as
\begin{equation*}
 \partial_r [w(\theta)]^T \Big[\tfrac{ \mathbb{E}_\theta [f(\textbf{X})]}
{\mathbb{E}_\theta [q(\textbf{X})^{\alpha - 1} + 
	w(\theta)^T f(\textbf{X}) ]} -
\tfrac{\bar{f}}
{\overline{q^{\alpha -1}} +   w(\theta)^T \bar{f}}\Big]=0.
\end{equation*}
As the family is regular, the above equation becomes
\begin{equation*}
\frac{ \mathbb{E}_\theta [f(\textbf{X})]}
{\mathbb{E}_\theta [q(\textbf{X})^{\alpha - 1} + 
	w(\theta)^T f(\textbf{X}) ]} -
\frac{\bar{f}}
{\overline{q^{\alpha -1}} + w(\theta)^T \bar{f}}=0,
\end{equation*}
by a similar argument as in (i). 
Now,
\begin{eqnarray}
\lefteqn{\overline{q^{\alpha -1}} + w(\theta)^T \bar{f}}\nonumber\\
&& =
\overline{q^{\alpha -1}} + 
\tfrac{\overline{q^{\alpha -1}} +  w(\theta)^T \bar{f}}
{\mathbb{E}_\theta [q(\textbf{X})^{\alpha - 1} + 
	w(\theta)^T f(\textbf{X}) ]} 
w(\theta)^T \mathbb{E}_\theta [f(\textbf{X})].\nonumber
\end{eqnarray}
Thus,
\begin{eqnarray}
\lefteqn{\big\lbrace\overline{q^{\alpha -1}} +  w(\theta)^T \bar{f}\big\rbrace\big\lbrace\mathbb{E}_\theta [q(\textbf{X})^{\alpha - 1} + 
	w(\theta)^T f(\textbf{X}) ]\big\rbrace}\nonumber\\
&& =
\overline{q^{\alpha -1}}~\mathbb{E}_\theta [q(\textbf{X})^{\alpha - 1} + 
w(\theta)^T f(\textbf{X}) ]\nonumber\\
&& \hspace*{1cm}+
[\overline{q^{\alpha -1}} + w(\theta)^T \bar{f}]
w(\theta)^T  \mathbb{E}_\theta [f(\textbf{X})],\nonumber
\end{eqnarray}
that is,
\begin{eqnarray}
\lefteqn{[\overline{q^{\alpha -1}} +  w(\theta)^T \bar{f}]~\mathbb{E}_\theta [q(\textbf{X})^{\alpha - 1}]}\nonumber\\
&&\hspace{1cm} =
\overline{q^{\alpha -1}}~\mathbb{E}_\theta [q(\textbf{X})^{\alpha - 1} + 
w(\theta)^T f(\textbf{X}) ].\nonumber
\end{eqnarray}
This implies 
\begin{equation*}
\frac{\overline{q^{\alpha -1}}}{\mathbb{E}_\theta [q(\textbf{X})^{\alpha - 1}]}
=
\frac{\overline{q^{\alpha -1}} + w(\theta)^T \bar{f}}{\mathbb{E}_\theta [q(\textbf{X})^{\alpha - 1} + 
	w(\theta)^T f(\textbf{X}) ]}.
\end{equation*}
Thus the estimator must satisfy (\ref{1eqn:estimating_equation_for_regular_M_alpha}).
\end{itemize}
\end{proof}
\begin{remark}
Observe that, Theorem \ref{1thm:estimating_equation_on_regular_family} (ii) essentially extends the
result known for a canonical $\mathbb{M}^{(\alpha)}$-family with finite support 
\cite[Th. 18 and Th. 21]{KumarS15J2}.
\end{remark}
 
 \section{Generalized Maximum Likelihood Estimation on Student-t distributions}
In this section, we first show that the Student-t distributions form an $\mathbb{M}^{(\alpha)}$-family
and then we apply Theorem \ref{1thm:estimating_equations_for_general_families}
to find the estimator for their parameters.

The $d$-dimensional Student-t distribution with mean $\mu := (\mu_1,\dots,\mu_d)^T$ and 
a positive-definite covariance matrix $\Sigma:=(\sigma _{ij})_{d\times d}$ is given by 
		\begin{equation}
		\label{1eqn:student_distribution}
		p_\theta (\textbf{\textit{x}}) = N_{\theta,\alpha} \big[ 1 +  b_\alpha (\textbf{\textit{x}}
		- {\mu})^T \Sigma ^{-1} (\textbf{\textit{x}} - \bf{\mu}) \big]_+ ^{\frac{1}{\alpha - 1}},
		\end{equation}
		where $d/(d+2) < \alpha, \alpha \neq 1$ and $b_\alpha = [1-\alpha]/[2\alpha - d (1 - \alpha)]$.
		Here $\nu:= [b_\alpha+2(1-\alpha)^2]/(1-\alpha)^2$ is the degrees of freedom. Let $\Sigma ^{-1}
		:= (\sigma^{ij})$, the inverse of $\Sigma$.
		The normalizing constant $N_{\theta,\alpha}$ is given by
		\begin{displaymath}
		N_{\theta, \alpha} := \left\{
		\begin{array}{ll}
		{\frac{b_\alpha^{d/2}\Gamma (1/1-\alpha)}{\Gamma ([1/1-\alpha] - [d/2]) \pi ^{d/2}
				|\Sigma|^{1/2}} } &\hbox{~if~} 
		\alpha < 1\\\\
		{\frac{[-b_\alpha]^{d/2}\Gamma ([\alpha/\alpha - 1] + [d/2])}{\Gamma (\alpha/\alpha - 1) \pi ^{d/
					2} |\Sigma|^{1/2}} } &\hbox{~if~}
		\alpha > 1.
		\end{array}
		\right.
		\end{displaymath}
		The support of this distribution is given by
		\begin{displaymath}
		\mathbb{S} = \left\{
		\begin{array}{ll}
		{\mathbb{R}^d} &\hbox{~if~} 
		\alpha < 1\\
		{\lbrace \textbf{\textit{x}} : (\textbf{\textit{x}}
			- {\mu})^T \Sigma ^{-1} (\textbf{\textit{x}} - {\mu}) \geq -1/b_\alpha\rbrace  } &\hbox{~if~}
		\alpha > 1.
		\end{array}
		\right.
		\end{displaymath}
		We use the following notations. For a matrix $A=(a_{ij})_{d\times d}$,
		\begin{eqnarray*}
			\text{Tr}(A) := \text{Trace of } A = \sum_i a_{ii},~~ A^{-T}:= (A^{-1})^T,\\
			\text{Vec} (A) := (a_{11},\dots,a_{1d},a_{21},\dots,a_{2d},\dots,a_{d1},\dots,a_{dd})^T,
		\end{eqnarray*}
		that is, if $A$ is a matrix of order $d$, then $\text{Vec}(A)$ is a $d^2$-dimensional
		column vector whose $[(i-1)d + j]$-th entry is $a_{ij}$, for all $i,j=1,\dots,d$.
		For $\textbf{\textit{x}}\in \mathbb{S}$, we can re-write
		(\ref{1eqn:student_distribution}) as
		\begin{align}
		\label{1eqn:student_distribution_as_M_alpha}
		p_\theta (\textbf{\textit{x}})
		 &= N_{\theta,\alpha} \big[ 1 + b_\alpha \lbrace
		\textbf{\textit{x}}^T \Sigma ^{-1} \textbf{\textit{x}} - 2 {\mu}^T\Sigma ^{-1} \textbf{\textit{x}}+\mu^T \Sigma ^{-1}\mu\rbrace\big]^{\frac{1}{\alpha -1}}\nonumber\\
		&= N_{\theta,\alpha} \big[ 1 + b_\alpha \lbrace \text{Tr}(
		\Sigma ^{-T} \textbf{\textit{x}}\textbf{\textit{x}}^T) - 2 (\Sigma ^{-1}\mu)^T \textbf{\textit{x}}\nonumber\\
		&\hspace*{4.5cm}+\mu^T \Sigma ^{-1}\mu\rbrace\big]^{\frac{1}{\alpha -1}}\nonumber\\
		&= N_{\theta,\alpha} \big[ 1 + b_\alpha \lbrace 
		\text{Vec} (\Sigma ^{-1})^T \text{Vec}(\textbf{\textit{x}}\textbf{\textit{x}}^T)\nonumber\\
		&\hspace*{2.5cm}
		 - 2 (\Sigma ^{-1}\mu)^T \textbf{\textit{x}}
		+\mu^T \Sigma ^{-1}\mu\rbrace\big]^{\frac{1}{\alpha -1}}.
		\end{align}
		Comparing (\ref{1eqn:student_distribution_as_M_alpha})
		with (\ref{1eqn:form_of_general_M_alpha_family}), we have
		Student-t distributions as a $(d^2+d)$-parameter 
		$\mathbb{M}^{(\alpha)}$-family, with
		\begin{eqnarray*}
		\theta = (\mu,\Sigma^{-1}),~ Z(\theta)^{-1} = N_{\theta,\alpha},~q(\textbf{\textit{x}})\equiv 1,\\
			w_1(\theta) = b_\alpha(\mu^T \Sigma ^{-1}\mu), ~~ f_1(\textbf{\textit{x}})= 1,\\
			w_2(\theta) = - 2b_\alpha (\Sigma ^{-1}\mu)^T, ~~ f_2(\textbf{\textit{x}}) = \textbf{\textit{x}},\\
			w_3(\theta) = b_\alpha \text{Vec} (\Sigma ^{-1})^T, ~~ f_3(\textbf{\textit{x}}) =
			\text{Vec}(\textbf{\textit{x}}\textbf{\textit{x}}^T).
			\end{eqnarray*}

Since for $\alpha < 1$, the support $\mathbb{S}$
does not depend upon the parameters, we can apply Theorem \ref{1thm:estimating_equations_for_general_families} (ii) to estimate the parameters $\mu$ and $\Sigma$.
Let us first calculate the derivative of each $w_i(\theta)$ with respect to each parameter.
\begin{eqnarray*}
\partial_{\mu} [w_1(\theta)] = 2b_\alpha (\Sigma ^{-1}\mu),\\
\partial_{\mu} [w_2(\theta)] = -2b_\alpha (\Sigma^{-1}),\\
\partial_{\mu} [w_3(\theta)] = O_{d\times d^2},
\end{eqnarray*}
where $O_{d\times d^2}$ is the zero matrix of order $d\times d^2$. For $i,j=1,\dots,d$
\begin{eqnarray*}
\partial_{\sigma^{ij}} [w_1(\theta)] = 2b_\alpha \mu_i\mu_j ,\\
\partial_{\sigma^{ij}} [w_2(\theta)] = u_{ij}, \\
\partial_{\sigma^{ij}} [w_3(\theta)] = v_{ij},
\end{eqnarray*}
where $u_{ij}$ is a $d$-dimensional row vector whose entries
are zero except the
$i$-th and $j$-th entries which are $(-2b_\alpha\mu_j)$ and $(-2b_\alpha\mu_i)$, respectively. Similarly, $v_{ij}$ is a 
$d^2$-dimensional row vector whose entries are zero except
the $[(i-1)d+j]$-th and $[(j-1)d+i]$-th which are
equal to $b_\alpha$.

Consider now an i.i.d. sample $\textbf{X}_1,\dots, \textbf{X}_n$ according to a $p_\theta$ of the form
(\ref{1eqn:student_distribution}), where $\textbf{X}_i = (X_{1i},\dots,X_{di})^T$, $i=1,\dots,n$.
Define $\overline{\textbf{X}} = (\overline{X}_1,\dots,\overline{X}_d)^T$, where
$\overline{X}_i = \frac{1}{n}\sum_{l=1}^n X_{il}$, for $i=1,\dots,d$, and $\overline{\textbf{X}
\textbf{X}^T}$ is the matrix of order $d\times d$ whose $(i,j)$-th entry is $\frac{1}{n}\sum_{l=1}^n X_{il}X_{jl}$. Let us denote
\begin{eqnarray*}
\textbf{Y}:= 1 + b_\alpha [ 
  \text{Vec} (\Sigma ^{-1})^T \text{Vec}(\overline{\textbf{X}\textbf{X}^T})\\
&&\hspace*{-1.5cm} 
  - 2 (\Sigma ^{-1}\mu)^T 
 \overline{\textbf{X}} 
 + \mu^T \Sigma ^{-1}\mu].
\end{eqnarray*} 
Using (\ref{1eqn:estimating_equation_for_general_M_alpha_family}), we have the following estimating equations for the Student-t distributions. 
\begin{eqnarray*}
\hspace*{1cm}\tfrac{-2b_\alpha \Sigma^{-1} \mathbb{E}_\theta [\textbf{X}] + 2b_\alpha (\Sigma^{-1}\mu)}
{\mathbb{E}_\theta[\textbf{Y}]}
 = \tfrac{-2b_\alpha \Sigma^{-1} \overline{\textbf{X}} + 2b_\alpha (\Sigma^{-1}\mu)}
 {\textbf{Y}},\\
&&\hspace*{-8.8cm}\tfrac{2 b_\alpha \mu_i\mu_j + u_{ij} \mathbb{E}_\theta [\textbf{X}] + v_{ij} \mathbb{E}_\theta
 [\text{Vec}(\textbf{X}\textbf{X}^T)]}{\mathbb{E}_\theta[\textbf{Y}]}
 = \tfrac{2 b_\alpha \mu_i\mu_j + u_{ij} \overline{\textbf{X}} + v_{ij} \text{Vec}(\overline{\textbf{X}\textbf{X}^T})}{\textbf{Y}},
\end{eqnarray*}
for $i,j = 1,\dots,d$.
Since $\mathbb{E}_\theta [\textbf{X}] = \mu$ and $\mathbb{E}_\theta [X_iX_j] = \sigma_{ij} +\mu_i\mu_j$,
the above system reduces to
\begin{eqnarray*}
\mu  = \overline{\textbf{X}},~\text{and}~
\frac{1}{n}\sum\limits_{l=1}^n X_{il}X_{jl} = \mu_i\mu_j + \tfrac{\textbf{Y}}{\mathbb{E}_\theta[\textbf{Y}]} \sigma_{ij}.
\end{eqnarray*}
Using these we have
\begin{align}
\mathbb{E}_\theta[\textbf{Y}] 
 &=\mathbb{E}_\theta [ 1 + b_\alpha \lbrace 
  \text{Vec} (\Sigma ^{-1})^T \text{Vec}(\textbf{X}\textbf{X}^T)\nonumber\\
  &\hspace*{3.3cm} - 2 (\Sigma ^{-1}\mu)^T
 \textbf{X} + \mu^T \Sigma ^{-1}\mu\rbrace] \nonumber\\
 &= 1 + b_\alpha \lbrace 
  \text{Vec} (\Sigma ^{-1})^T [\text{Vec}(\Sigma) + \text{Vec}(\mu\mu^T)] \nonumber\\
  &\hspace*{3.3cm}- 2 (\Sigma ^{-1}\mu)^T
  \mu + \mu^T \Sigma ^{-1}\mu\rbrace \nonumber\\
  &= 1 + b_\alpha \lbrace 
  \text{Tr}(\Sigma ^{-1} \Sigma) + \text{Tr}(\Sigma^{-1}\mu \mu^T) - \mu^T \Sigma ^{-1}\mu\rbrace \nonumber\\
  &=  1 + b_\alpha \lbrace 
  d + \text{Tr}(\mu^T\Sigma^{-1}\mu) - \text{Tr}(\mu^T \Sigma ^{-1}\mu)\rbrace \nonumber\\
  &= 1+d\cdot b_\alpha,\label{1eqn:value_of_E[K]}
\end{align}
and
\begin{align}
\textbf{Y} &= 1 + b_\alpha \lbrace 
  \text{Vec} (\Sigma ^{-1})^T \text{Vec}(\overline{\textbf{X}\textbf{X}^T}) - 2 (\Sigma ^{-1}\mu)^T
 \overline{\textbf{X}} + \mu^T \Sigma ^{-1}\mu\rbrace \nonumber\\
 &= 1 + b_\alpha \lbrace 
  \text{Vec} (\Sigma ^{-1})^T \text{Vec}\big(\tfrac{\textbf{Y}}{\mathbb{E}_\theta[\textbf{Y}]}\Sigma + \mu\mu^T\big)\nonumber\\
  &\hspace*{4cm} - 2 (\Sigma ^{-1}\mu)^T \mu + \mu^T \Sigma ^{-1}\mu\rbrace  \nonumber\\
  &= 1 + b_\alpha \lbrace 
  \text{Vec} (\Sigma ^{-1})^T \text{Vec}\big(\tfrac{\textbf{Y}}{\mathbb{E}_\theta[Y]}\Sigma \big)\nonumber\\
  &\hspace*{3cm}+ 
  \text{Vec} (\Sigma ^{-1})^T \text{Vec}(\mu\mu^T) - \mu^T \Sigma ^{-1}
  \mu\rbrace \nonumber\\
  &= 1 + b_\alpha \lbrace
  \tfrac{\textbf{Y}}{\mathbb{E}_\theta[\textbf{Y}]} d + \text{Tr} (\Sigma^{-1}\mu\mu^T)  - \mu^T\Sigma^{-1}\mu\rbrace  \nonumber\\
  &= 1 + d\cdot b_\alpha\tfrac{\textbf{Y}}{\mathbb{E}_\theta[\textbf{Y}]}.\label{1eqn:value_of_K}
\end{align}
Equations (\ref{1eqn:value_of_E[K]}) and (\ref{1eqn:value_of_K}) together imply 
$\textbf{Y} = \mathbb{E}_\theta[\textbf{Y}]$. Thus the estimating equations for Student-t distributions become
\begin{eqnarray*}
\hspace*{0.7cm}\mu = \overline{\textbf{X}},~\text{and}~
\frac{1}{n}\sum\limits_{l=1}^n X_{il}X_{jl} = \mu_i\mu_j + \sigma_{ij}.
\end{eqnarray*}
Hence the estimators for $\mu$ and $\Sigma$ are
\begin{eqnarray}
\label{1eqn:estimators_for_student_distribution}
\widehat{\mu} = \overline{\textbf{X}},~\text{and}~
\widehat{\sigma_{ij}} = \frac{1}{n}\sum\limits_{l=1}^n X_{il}X_{jl} - \widehat{\mu_i}\widehat{\mu_j}.
\end{eqnarray}
These are summarized in the following theorem.
\begin{theorem}
\label{1thm:estimators_for_student_t}
Let $\alpha < 1$. The estimator for the mean and covariance parameters of a Student-t distribution in
(\ref{1eqn:student_distribution}) by the estimating equation (\ref{1eqn:score_equation_I_alpha}) of Jones {\em et.~al.}, are
\begin{equation}
\widehat{\mu} = \overline{\textbf{X}},~~ \widehat{\sigma_{ij}} = \frac{1}{n} \sum\limits_{l=1}^n
(X_{il} - \widehat{\mu_i}) (X_{jl} - \widehat{\mu_j}).
\end{equation}
\end{theorem}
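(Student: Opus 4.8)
The plan is to exhibit the Student-t density (\ref{1eqn:student_distribution}) as a member of the general $\mathbb{M}^{(\alpha)}$-family and then invoke Theorem \ref{1thm:estimating_equations_for_general_families}(ii). First I would expand the quadratic form $b_\alpha(\textbf{\textit{x}}-\mu)^T\Sigma^{-1}(\textbf{\textit{x}}-\mu)$ and recast it via the trace identity $\textbf{\textit{x}}^T\Sigma^{-1}\textbf{\textit{x}}=\text{Tr}(\Sigma^{-T}\textbf{\textit{x}}\textbf{\textit{x}}^T)$ together with vectorization, as in (\ref{1eqn:student_distribution_as_M_alpha}), so as to match (\ref{1eqn:form_of_general_M_alpha_family}) with $q(\textbf{\textit{x}})\equiv 1$, $Z(\theta)^{-1}=N_{\theta,\alpha}$, and the three blocks $w_1,w_2,w_3$ and $f_1,f_2,f_3$ read off directly from the comparison. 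Since for $\alpha<1$ the support is all of $\mathbb{R}^d$ and hence independent of $\theta$, regularity condition (c) holds and Theorem \ref{1thm:estimating_equations_for_general_families}(ii) applies. I note the family is not \emph{regular} in the sense of Definition \ref{1defn:regular_family} (the $w_i$'s and $\theta_i$'s do not match in number, and $f_1\equiv 1$), which is precisely why the general part (ii) is needed rather than Theorem \ref{1thm:estimating_equation_on_regular_family}.

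Next I would differentiate each $w_i(\theta)$ with respect to $\mu$ and with respect to each $\sigma^{ij}$, and substitute these gradients into (\ref{1eqn:estimating_equation_for_general_M_alpha_family}). Writing $\textbf{Y}$ for the sample version of the bracketed expression (as displayed just before this theorem), the block coming from $\partial_\mu$, in which only $w_1,w_2$ survive, reduces after inserting $\mathbb{E}_\theta[\textbf{X}]=\mu$ to a vanishing left-hand numerator, forcing $\Sigma^{-1}(\mu-\overline{\textbf{X}})=0$ and hence $\mu=\overline{\textbf{X}}$. The block coming from $\partial_{\sigma^{ij}}$, after inserting the second moment $\mathbb{E}_\theta[X_iX_j]=\sigma_{ij}+\mu_i\mu_j$, collapses to the single relation $\tfrac1n\sum_l X_{il}X_{jl}=\mu_i\mu_j+\tfrac{\textbf{Y}}{\mathbb{E}_\theta[\textbf{Y}]}\,\sigma_{ij}$, in which the unknown scalar ratio $\textbf{Y}/\mathbb{E}_\theta[\textbf{Y}]$ still remains.

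The crux — and the main obstacle — is to show that this correction factor equals one, i.e. $\textbf{Y}=\mathbb{E}_\theta[\textbf{Y}]$. I would settle this by computing the two quantities separately. Using $\text{Tr}(\Sigma^{-1}\Sigma)=d$ together with the moment identities, a direct calculation gives $\mathbb{E}_\theta[\textbf{Y}]=1+d\,b_\alpha$ (this is (\ref{1eqn:value_of_E[K]})). On the other hand, feeding the moment relation just derived back into the definition of $\textbf{Y}$ produces the self-referential identity $\textbf{Y}=1+d\,b_\alpha\,\textbf{Y}/\mathbb{E}_\theta[\textbf{Y}]$ (this is (\ref{1eqn:value_of_K})). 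Comparing the two displays forces $\textbf{Y}=\mathbb{E}_\theta[\textbf{Y}]$, so the correction factor is exactly $1$.

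With the factor removed, the second-moment equation becomes $\tfrac1n\sum_l X_{il}X_{jl}=\mu_i\mu_j+\sigma_{ij}$. Finally, substituting the already obtained $\widehat{\mu}=\overline{\textbf{X}}$ and rearranging yields $\widehat{\sigma_{ij}}=\tfrac1n\sum_l X_{il}X_{jl}-\widehat{\mu_i}\widehat{\mu_j}=\tfrac1n\sum_l(X_{il}-\widehat{\mu_i})(X_{jl}-\widehat{\mu_j})$, the claimed estimator. Everything apart from the elimination of $\textbf{Y}/\mathbb{E}_\theta[\textbf{Y}]$ is routine bookkeeping with traces and vectorizations; that self-referential cancellation is the one genuinely delicate step.
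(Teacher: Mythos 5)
Your proposal is correct and follows essentially the same route as the paper: casting the Student-t density as a $(d^2+d)$-parameter $\mathbb{M}^{(\alpha)}$-family, applying Theorem \ref{1thm:estimating_equations_for_general_families}(ii), reducing via the moment identities, and eliminating the factor $\textbf{Y}/\mathbb{E}_\theta[\textbf{Y}]$ through exactly the two computations (\ref{1eqn:value_of_E[K]}) and (\ref{1eqn:value_of_K}). Your identification of that self-referential cancellation as the crux, and your remark on why the non-regularity of the family forces the use of the general theorem rather than Theorem \ref{1thm:estimating_equation_on_regular_family}, both match the paper's reasoning.
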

\begin{remark}
\label{1rem:equivalence_of_student_t_and_normal_distribution}
It can be shown that as $\alpha\to 1$, the Student-t distributions coincide with a normal distribution
with mean $\mu$ and covariance matrix $\Sigma$. Also, for $\alpha =1$, the estimating equation
(\ref{1eqn:score_equation_I_alpha}) is actually the usual score equation
(\ref{1eqn:score_equation_mle_in_terms_of_sample}) of MLE. Thus there is a continuity on the $\alpha$-estimation for $\alpha$ in $(0,1]$.
\end{remark}

The condition that the support $\mathbb{S}$ is independent of the parameters is necessary for Theorem \ref{1thm:estimating_equations_for_general_families}. The
{\em uniform distribution}
in $(0, \theta)$, where $\theta$ is the unknown parameter,
can be expressed as an exponential family. But the
support $\mathbb{S}$ of this family depends on the parameter $\theta$. Also the MLE for $\theta$
can not be obtained by simply solving the estimating
equation (\ref{1eqn:estimating_equation_for_general_exponential_family}). Here we present such an example for the Jones {\em et. al.} estimation (\ref{1eqn:score_equation_I_alpha}). 

Let us consider, for simplicity, the Student-t distributions with $\alpha = 2$ and variance $\sigma=1$.
Then the pdf is given by
\begin{equation}
\label{1eqn:student_t_for_alpha_equal_to_2}
p_\mu (x) = N_{2} \Big[1 -\frac{(x -\mu)^2}{5}\Big]_+,
\end{equation}
where $N_2 = \Gamma(5/2)/\sqrt{5\pi} \Gamma (2) = 3/4\sqrt{5}$ and the support is given by
\begin{equation*}
\mathbb{S} = \lbrace x : \mu - \sqrt{5} \leq x \leq \mu +\sqrt{5}\rbrace
\end{equation*}
which depends on the unknown parameter $\mu$. Thus we cannot use Theorem \ref{1thm:estimating_equations_for_general_families} (ii) directly
to estimate $\mu$. However, solving the estimating equation (\ref{1eqn:score_equation_I_alpha})
is same as maximizing the following generalized likelihood function\footnote{This
coincides with the usual log likelihood function for MLE as $\alpha\to 1$.} for $p_\theta$,
\begin{equation}
\label{1eqn:likelihood_function_for_I_alpha}
L^{(\alpha)}(\theta) := \tfrac{\alpha}{\alpha-1}\log\Big[\tfrac{1}{n}\sum\limits_{j=1}^n p_\theta(\textbf{X}_j)^{\alpha-1}\Big] - \log\Big[\int p_\theta(\textbf{\textit{x}})^\alpha 
d\textbf{\textit{x}}\Big].
\end{equation}
Suppose that $X_1,\dots,X_n$ is an i.i.d. sample drawn according to $p_\mu$ in
(\ref{1eqn:student_t_for_alpha_equal_to_2}). Then
\begin{align}
\label{1eqn:likelihood_function_of_student_t_for_alpha_2}
\lefteqn{L^{(2)}(\mu\mid X_1,\dots,X_n)}\nonumber\\ &= 2 \log \Big[\tfrac{1}{n} \sum\limits_{i=1}^n p_\mu (X_i) {\bf 1}(\mu -\sqrt{5}
\leq X_i \leq
\mu +\sqrt{5})\Big]\nonumber\\
&\hspace*{2.3cm} - \log \Big(\mathbb{E}_\mu \Big[N_{2} \Big\lbrace 1 -\frac{(X -\mu)^2}{5}\Big\rbrace\Big]\Big)\nonumber\\
&= 2 \log \Big[\tfrac{1}{n} \sum\limits_{i=1}^n p_\mu (X_i) {\bf 1}(X_i -\sqrt{5}\leq \mu \leq
X_i +\sqrt{5})\Big]\nonumber\\
&\hspace*{4.6cm} - \log \frac{4 N_2}{5},
\end{align}
where ${\bf 1}(\cdot)$ denotes the indicator function. 

The maximizer of $L^{(2)}(\mu)$ is same as the maximizer of
\begin{eqnarray}
\label{1eqn:alternative_likelihood_of_student_t_for_alpha_2}
\ell^{(2)} (\mu) := \sum\limits_{i=1}^n p_\mu (X_i) {\bf 1}(X_i -\sqrt{5}\leq \mu \leq
X_i +\sqrt{5}).
\end{eqnarray}
Without loss of generality, let us assume that $X_1< X_2<\cdots<X_n$.
It is clear from (\ref{1eqn:alternative_likelihood_of_student_t_for_alpha_2})
that one needs to have the knowledge of the entire sample to decide the maximizer of 
$\ell ^{(2)}(\mu)$.
\begin{equation}
\label{1eqn:best_case}
  \text{Let us first suppose that }(X_n - X_1) \leq 2\sqrt{5}.
\end{equation}
Then one can choose
a $\mu$ in $[ X_1 -\sqrt{5}, X_n +\sqrt{5}]$ such that $p_\mu(X_i)> 0$ for some 
$i\in \lbrace 1,\dots,n\rbrace$. Thus, in this case, we have
\begin{displaymath}
	\ell^{(2)}(\mu) = \left\{
	\begin{array}{ll}
	 { p_\mu(X_1),~ \text{for}~ \mu \in [X_1-\sqrt{5}, X_2-\sqrt{5}] }  
	\\
	  {\sum\limits_{i=1}^2 p_\mu(X_i),~\text{for}~ \mu\in [X_2-\sqrt{5}, X_3-\sqrt{5}]}  
	  \\
	  ~~\vdots\\
	 {\sum\limits_{i=1}^{n-1} p_\mu(X_i),~\text{for}~ \mu \in [X_{n-1}-\sqrt{5}, X_{n}-\sqrt{5}]}  
	  \\
	 {\sum\limits_{i=1}^{n} p_\mu(X_i),~\text{for}~ \mu\in [X_{n}-\sqrt{5}, X_{1}+\sqrt{5}]}\\
	 {\sum\limits_{i=2}^{n} p_\mu(X_i),~\text{for}~ \mu\in [X_{1}+\sqrt{5}, X_{2}+\sqrt{5}]}  
	   \\
	  ~~\vdots\\
	 {\sum\limits_{i=n-1}^{n} p_\mu(X_i),\text{for}~\mu\in [X_{n-2}+\sqrt{5}, X_{n-1}+\sqrt{5}]}\\
	 {p_\mu(X_n),~\text{for}~ \mu\in [X_{n-1}+\sqrt{5}, X_{n}+\sqrt{5}]}\\
	 0 \hspace*{1.2cm}\text{otherwise}
	 .
	\end{array}
	\right.
	\end{displaymath}
Let $\overline{X^{(k)}}:= \frac{1}{k}\sum
_{i=1}^k X_i$, and $\overline{X_{(k)}}:= \frac{1}{n-k}\sum
_{i=k+1}^n X_i$ for $k=1,\dots,n-1$.

The maximizer of $\ell^{(2)}(\mu)$ on $[X_k-\sqrt{5}, X_{k+1}-\sqrt{5}]$ is
\begin{align*}
\mu^{(k)} := \text{median} \lbrace X_k-\sqrt{5}, \overline{X^{(k)}}, X_{k+1}-\sqrt{5}\rbrace,
\end{align*}
that on $[X_n-\sqrt{5}, X_1+\sqrt{5}]$ is
\begin{align*}
\mu^{(n)} := \text{median} \lbrace  X_n-\sqrt{5}, \overline{X}, X_{1}+\sqrt{5}\rbrace, 
\end{align*}
and on $[X_k+\sqrt{5}, X_{k+1}+\sqrt{5}]$ is
\begin{align*}
\mu_{(k)} := \text{median} \lbrace X_k+\sqrt{5}, \overline{X_{(k)}}, X_{k+1}+\sqrt{5}\rbrace,
\end{align*}
for $k=1,\dots,n-1$. Let
\begin{equation*}
\mathcal{M} := \lbrace \mu^{(k)}, \mu^{(n)}, \mu_{(k)}: k=1,\dots,n-1\rbrace.
\end{equation*}
Then the estimator of $\mu$ is
\begin{equation*}
\widehat{\mu}:= \displaystyle\arg\max_{\mu\in \mathcal{M}} \ell^{(2)} (\mu).
\end{equation*}
Thus it is clear that $\widehat{\mu}$ is not necessarily $\overline{X}$. For illustration,
let us suppose that the observed sample is 4.6, 4.7, 6.0, 7.0, 8.2, 8.6, 8.7, 8.8, 8.9, and 9.0. Then
$X_n -X_1 = 9-4.6= 4.4 <2\sqrt{5}$, $\mu^{(10)}= 6.84$ and
\vspace*{-0.2cm}

\begin{table}[h!]
  \begin{center}
    \begin{tabular}{|l|l|l|l|l|l|l|l|l|}
    \hline
      $\mu^{(1)}$ & $\mu^{(2)}$ & $\mu^{(3)}$ & $\mu^{(4)}$ & $\mu^{(5)}$ &
      $\mu^{(6)}$ & $\mu^{(7)}$ & $\mu^{(8)}$ & $\mu^{(9)}$\\
      \hline
     2.46 & 3.76 & 4.76 & 5.57  & 6.1 &
      6.46 & 6.56 & 6.66 & 6.76 \\ 
      \hline
      \hline
      $\mu_{(1)}$ & $\mu_{(2)}$ & $\mu_{(3)}$ & $\mu_{(4)}$ & $\mu_{(5)}$ &
      $\mu_{(6)}$ & $\mu_{(7)}$ & $\mu_{(8)}$ & $\mu_{(9)}$\\
      \hline
     6.94 & 8.15  & 8.46  & 9.24  & 10.44 &
     10.84  & 10.94 & 11.04 &  11.14\\ 
      \hline
    \end{tabular}
  \end{center}
\end{table}
\vspace*{-0.4cm}
The respective maximum values of $\ell^{(2)} (\mu)$ are given by $3.7 N_2$ and
\vspace*{-0.3cm}

\begin{table}[h!]
  \begin{center}
    \begin{tabular}{|l|l|l|l|l|l|}
    \hline
     0.08$N_2$ & 1.68$N_2$ & 2.69$N_2$ & 3.21$N_2$ & 3.11$N_2$ &
     3.07$N_2$\\
     \hline  
     3.15$N_2$ & 3.3$N_2$ & 3.5$N_2$ &
     4.02$N_2$ & 6.37$N_2$ & 6.42$N_2$\\
      \hline
      5.57$N_2$  & 2.3$N_2$ &
     0.82$N_2$  & 0.5$N_2$ & 0.25$N_2$ & 0.84$N_2$ \\ 
      \hline
    \end{tabular}
  \end{center}
\end{table}
\vspace*{-0.4cm}
Thus, the maximum value of $\ell^{(2)} (\mu)$ is $6.42N_2$ and the maximizer is $\mu_{(3)} = 8.46$. Hence 
$\widehat{\mu} = 8.46$, which is not equal to $\overline{X}=7.45$.

Similarly, if (\ref{1eqn:best_case}) is true excluding one sample, say $X_1$, that is,
if $X_n - X_2 \leq 2 \sqrt{5}$,
but $X_n- X_1 > 2\sqrt{5}$, then we can
follow the same procedure with $X_2,\dots,X_n$ to find $\widehat{\mu}$. Thus, in general, if
there are $k$ samples such that (\ref{1eqn:best_case}) is true excluding these $k$ samples, then
we can proceed similarly with the rest $(n-k)$ samples to find $\widehat{\mu}$. Finally, when
all the samples are more than $2\sqrt{5}$ apart from each other, the intervals in
(\ref{1eqn:alternative_likelihood_of_student_t_for_alpha_2}) will be disjoint and
in this case any sample point can be taken to be the estimator.

\section{Summary}
\label{sec:summary}
In this paper we extended the already known projection theorem of the $\mathscr{I}_\alpha$-divergence
to the canonical $\mathbb{M}^{(\alpha)}$-family on the finite alphabet set of
\cite{KumarS15J2} to the more general multivariate,
continuous $\mathbb{M}^{(\alpha)}$-family and applied the result to find
estimators for the Student-t distributions. In the case when $\alpha <1$, we showed that the estimators are same as the maximum likelihood estimators of the Gaussian distribution, and can be obtained by solving the estimating equations (or projection equations). However, in the case when $\alpha >1$, the estimators cannot be obtained by solving the estimating equations and one needs to obtain it by maximizing the generalized likelihood function on a case-by-case basis. 

\section*{Acknowledgments}
Atin Gayen is supported by an INSPIRE fellowship, the Department of Science and Technology, Government of India.

 \end{document}